\documentclass[a4paper,11pt,reqno]{amsart}

\usepackage{amsmath}
\usepackage{enumerate}
\usepackage{amsfonts}
\usepackage{gensymb}
\usepackage{wasysym}
\usepackage{amsthm}
\usepackage{enumerate}
\usepackage{amssymb}
\usepackage{tikz}
\usetikzlibrary{trees}
\usetikzlibrary{arrows}
\usepackage{stmaryrd}
\usepackage{slashbox}
\usepackage{fullpage}
\usepackage{nicematrix}
\usetikzlibrary{automata, positioning, arrows}

\newcommand{\E}{{\mathcal E}}
\newcommand{\G}{{\mathcal G}}

\newcommand{\N}{\mathbb N}

%

\renewcommand{\dim}{\mathrm{dim}}
\newcommand{\ddim}{\mathrm{Dim}}
\newcommand{\sdim}{\mathrm{sdim}}
\newcommand{\halts}{{\downarrow}}

\newcommand{\llb}{\llbracket}
\newcommand{\rrb}{\rrbracket}
\newcommand{\uh}{{\upharpoonright}}

\newtheorem{thm}{Theorem}[section]

\theoremstyle{definition}
\newtheorem{dfn}[thm]{Definition}

\theoremstyle{remark}

\usepackage{xcolor}	
	\usepackage{soul}

\title{Length Functions and the Dimension of Points in Self-Similar Fractal Trees}
\author{Christopher P.\ Porter}

\begin{document}

\maketitle

\begin{abstract}  
In this paper, we study the effective dimension of points in infinite fractal trees generated recursively by a finite tree over some alphabet.  Using unequal costs coding, we associate a length function with each such fractal tree and show that the channel capacity of the length function is equal to the similarity dimension of the fractal tree (up to a multiplicative constant determined by the size of the alphabet over which our tree is defined).  Using this result, we derive formulas for calculating the effective dimension and strong effective dimension of points in fractal trees, establishing analogues of several results due to Lutz and Mayordomo, who studied the effective dimension of points in self-similar fractals in Euclidean space.  Lastly, we explore the connections between the channel capacity of a length function derived from a finite tree and the measure of maximum entropy on a related directed multigraph that encodes the structure of our tree, drawing on work by Abram and Lagarias on path sets, where a path set is a generalization of the notion of a sofic shift.

\end{abstract}

\section{Opening}

The tools from algorithmic information theory, particularly the notion of effective dimension of an individual point, have found successful application in the study of fractal geometry.  As a particularly striking example, Lutz and Mayordomo in \cite{LutMay08} provide a general formula for calculating the effective dimension $\dim(x)$ of a point $x$ in a self-similar fractal $F$ in $\mathbb{R}^n$:
\[
\dim(x)=\sdim(F)\dim^\mu(y),
\]
where $\sdim(F)$ is the similarity dimension of $F$, $\dim^\mu(\cdot)$ is an effective analogue of the Billingsley dimension with respect to a specific probability measure $\mu$ defined in terms of the fractal $F$, and $y$ is an infinite sequence over some finite alphabet that serves as a code for $x$ as an element of $F$ (we will define all of these notions in Section \ref{sec-coding} below).  They further established an analogous for effective strong dimension $\ddim(x)$ of points $x\in F$.

In this study, we prove similar results for points in infinite self-similar trees over some finite alphabet.  Our main proof relies on the machinery of coding with unequal costs from information theory, which in our context, amounts to considering notions of algorithmic information theory in terms of length functions that do not necessarily measure the length of a string of symbols merely in terms of the number of symbols in the string.  More generally, we will identify, for a given $m$-ary tree $T$ for some $m\in\N$, the relationship between the following quantities from seemingly disparate areas:

\begin{enumerate}
\item $\alpha$, the channel capacity of a code with unequal letter costs that is defined in terms of $T$,
\item $\sdim(F)$, the similarity dimension of an infinite self-similar tree $F$ generated by $T$, and
\item $\log(\lambda)$, the negative logarithm of the Perron eigenvalue of the adjacency matrix of a specific directed graph $G$ determined by $T$,
\end{enumerate}
are given by the equalities
\[
\alpha=\log(m)\cdot\sdim(F)=\log(\lambda)
\]
(where the logarithm here and henceforth is taken to the base 2).
In the case that $T$ is a binary tree, we get the equality of the three quantities (1)-(3) listed above.
From the relationship between (1) and (2), we will derive our analogue of the above-mentioned Lutz/Mayordomo result that holds for points in self-similar trees.  Using (3), we show how the measure $\mu$ used in the calculation of the effective version of Billingsley dimension mentioned above can be obtained from a transformation of the unique measure of maximal entropy on a specific subshift related to the tree $T$ (in this case, the Parry measure on a specific sofic shift).

The outline of the remainder of this article is as follows.  In Section \ref{sec-background}, we provide background material, including the basics of effective dimension.  In Section \ref{bigger-picture}, we lay out a brief survey of previous work at the intersection of algorithmic information theory, fractal geometry, and symbolic dynamics in order to provide a broader context for the reader. Section \ref{sec-coding} begins the original contributions of this work, in which we establish the above-described relationship between the channel capacity of a length function generated from a finite tree $T$ and the similarity dimension of the self-similar fractal tree generated from $T$.  Using this relationship, we derive the analogue of the Lutz/Mayordomo formulas for the dimension and strong dimension of points in the self-similar fractal generated from $T$.   Finally, in Section \ref{sec-dynamics}, we take a dynamical systems perspective on self-similar fractal trees.  In particular, we show how to obtain a directed multigraph $G$ from a given finite tree $T$, where the one-sided infinite walks through $G$ are precisely the infinite paths through the infinite self-similar fractal $F$ generated by $T$, thereby establishing the equivalence between channel capacity of the length function with the value $\log(\lambda)$ described in (3) above.  Using properties of $G$, we derive the measure of maximum entropy on the closure of $F$ under the shift operator and prove that this measure is equivalent to one naturally defined in terms of the length function associated with the tree $T$.

\section{Background} \label{sec-background}

For $k\in\N$, $\Sigma_k^*$ consists of all finite strings over the alphabet $\Sigma_k=\{0,1,\dotsc,k-1\}$.  Similarly, $\Sigma_k^\infty$ consists of all infinite sequences over the same alphabet. We will write elements of $\Sigma_k^*$ as $\sigma,\tau,$ and so on, while elements of $\Sigma_k^\infty$ will be written as $x,y,z$, and so on.  The concatenation of strings $\sigma,\tau\in\Sigma_k^*$ is written as $\sigma^\frown\tau$.  The standard length of a string $\sigma\in\Sigma_k^*$ is written as $|\sigma|$ (we will consider more general length functions in Section \ref{sec-coding}).  Given $\sigma,\tau\in \Sigma_k^*$, we write $\sigma\preceq\tau$ to indicate that $\sigma$ is an initial segment of $\tau$; similarly, if $\sigma$ is an initial segment of some $x\in \Sigma_k^\infty$, we write $\sigma\prec x$.   Given $x\in \Sigma_k^\infty$ and $i\in\mathbb{N}$, $x\uh i$ is the initial segment of $x$ of length $i$.  More generally, given $x\in \Sigma_k^\infty$ and $m,i\in\mathbb{N}$, $x\uh[m,m+i)$ is the string $\tau\in\Sigma_k^*$ of length $i$ such that $\tau(j)=x(m+j)$ for $j=0,\dotsc,i-1$.   A set $S\subseteq \Sigma_k^*$ is \emph{prefix-free} if for $\sigma,\tau\in S$, $\sigma\preceq\tau$ implies that $\sigma=\tau$.

The trees that we consider here are subsets of $\Sigma_k^*$ that are closed downwards under $\prec$. Given a prefix-free set $S\subseteq\Sigma_k^*$, we can define a tree $T$ by closing $S$ downwards under $\prec$.  In this case, in an abuse of terminology, we will refer to the members of $S$ as the \emph{terminal nodes} of $T$; any element of $T$ that is not a terminal node will be referred to as a \emph{non-terminal node} of $T$.  Hereafter, we will specify a finite tree simply by specifying its set of terminal nodes.  Given an infinite tree $T\subseteq\Sigma_k^*$, we will write $[T]$ as the set of infinite paths through $T$, i.e., $[T]=\{x\in\Sigma_k^\infty\colon (\forall n)\;x\uh n\in T\}$.

For $\sigma\in\Sigma_k^*$, the \emph{cylinder set defined by $\sigma$} is the set $\llb\sigma\rrb=\{x\in \Sigma_k^\infty\colon \sigma\prec x\}$.  Let $(\sigma_i)_{i\in\N}$ be the enumeration of $\Sigma_k^*$ in length-lexicographical order. A set $S\subseteq \Sigma_k^\infty$ is \emph{effectively open} (or a \emph{$\Sigma^0_1$ class}) if there is some computable function $f:\N\rightarrow\N$ such that $S=\bigcup_{i\in\N}\llb\sigma_{f(i)}\rrb$.  $P\subseteq \Sigma_k^\infty$ is \emph{effectively closed} (or a \emph{$\Pi^0_1$ class}) if $\Sigma_k^\infty\setminus P$ is effectively open.  Equivalently, $P$ is a $\Pi^0_1$ class if $P=[T]$ for some infinite computable tree $T\subseteq \Sigma_k^*$.

A measure $\mu$ on $\Sigma_k^\infty$ is determined by its values on cylinder sets.  Hereafter, for $\sigma\in \Sigma_k^*$, we will write $\mu(\llb\sigma\rrb)$ as $\mu(\sigma)$.
$\mu$ is a \emph{computable} measure if $\mu(\sigma)$ can be computably approximated to an arbitrary precision uniformly in $\sigma\in\Sigma_k^*$.  A measure $\mu$ on $\Sigma_k^\infty$ is a \emph{Bernoulli measure} if $\mu(\sigma^\frown i)=\mu(\sigma)\mu(i)$ for all $\sigma\in \Sigma_k^*$ and $i\in\Sigma_k$.

We assume the reader is familiar with the basics of computability theory and algorithmic randomness; see, for instance, \cite{Nie09}, \cite{DowHir10}, or \cite{SheUspVer17}.  See also \cite{FraPor20} for an recent survey on algorithmic randomness.

For $m,k\in\N$, a Turing functional $\Phi:\Sigma_k^\infty\rightarrow\Sigma_m^\infty$ is an effective map defined in terms of a computable function $\phi:\Sigma_k^*\rightarrow\Sigma_m^*$ satisfying the property that for $\sigma,\tau\in \Sigma_k^*$, $\sigma\preceq\tau$ implies $\phi(\sigma)\preceq\phi(\tau)$.  For such a function $\phi$, we can define $\Phi$ on $x\in\Sigma_k^\infty$ by setting $\Phi(x)=\lim_{n\rightarrow\infty}\phi(x\uh n)$ (where this limit is the longest element of $\Sigma_m^*\cup\Sigma_m^\infty$ that has $\phi(x\uh n)$ as an initial segment for all $n\in\mathbb{N}$).  In our context, we will only consider \emph{total} Turing functionals, i.e., functionals given in terms of a computable function $\phi$ for which $\lim_{n\rightarrow\infty}|\phi(x\uh n)|=\infty$ for all $x\in\Sigma_k^\infty$.

Notions of the effective dimension of a sequence $x\in\Sigma_k^\infty$ were introduced by Lutz in \cite{Lut00} and further developed in, for instance, \cite{May02},  \cite{Lut03}, and \cite{AthHitLut04}.  Lutz originally defined effective dimension in terms of certain betting strategies he referred to as gales, but a characterization of dimension in terms of Kolmogorov complexity was later given by Mayordomo (in \cite{May02}).  We will use this latter characterization in the present study.


Fix $m\in\N$. Let $M:\Sigma_2^*\rightarrow \Sigma_m^*$ be a prefix-free Turing machine (recall that a Turing machine $M$ is prefix-free if the domain of $M$ forms a prefix-free set).
For $\tau\in \Sigma_m^*$, the \emph{prefix-free Kolmogorov complexity of $\tau$ with respect to $M$} is defined to be $K_M(\tau)=\min\{|\sigma|: M(\sigma)=\tau\}$.  Let $(M_i)_{i\in\N}$ be an effective enumeration of all prefix-free machines.  We can define a universal prefix-free machine $U:\Sigma_2^*\rightarrow \Sigma_m^*$ by setting, for each $e\in\N$ and $\sigma\in\Sigma_n^*$, $U(1^e0\sigma)=M_e(\sigma)$ when the latter is defined (otherwise, $U(1^e0\sigma)$ is undefined).  For $\tau\in\Sigma_m^*$, we then define the \emph{prefix-free Kolmogorov complexity of $\tau$} to be $K(\tau):=K_U(\tau)$.  As shown by Kolmogorov \cite{Kol65}, for every prefix-free machine $M$, $K(\sigma)\leq K_M(\sigma)+O(1)$ for all $\sigma\in\Sigma_m^*$, where the additive constant only depends on $M$. Using Kolmogorov complexity, we can define two notions of effective dimension as follows.  For $x\in \Sigma_m^\infty$,
\[
\dim(x)=\liminf_{n\rightarrow\infty}\frac{K(x\uh n)}{n\log(m)}
\]
and 
\[
\ddim(x)=\limsup_{n\rightarrow\infty}\frac{K(x\uh n)}{n\log(m)},
\]
where the former notion is known as the \emph{effective dimension} of $x$ and the latter notion is know as the \emph{effective strong dimension} of $x$.

In \cite{LutMay08}, Lutz and Mayordomo introduced two effective notions of Billingsley dimension that generalize the above two notions.  For a computable measure $\mu$ on $\Sigma_m^\infty$ and $x\in\Sigma_m^\infty$, we have
\[
\dim^\mu(x)=\liminf_{n\rightarrow\infty}\frac{K(x\uh n)}{-\log\mu(x\uh n)}
\]
and
\[
\ddim^\mu(x)=\limsup_{n\rightarrow\infty}\frac{K(x\uh n)}{-\log\mu(x\uh n)},
\]
the \emph{effective dimension of $x$ with respect to $\mu$} and the \emph{effective strong dimension of $x$ with respect to $\mu$}, respectively.

%

\section{The bigger picture}\label{bigger-picture}

Before we turn to the main contributions of this study, we first offer a brief survey of the key developments that lie at the intersection of algorithmic information theory, fractal geometry, and symbolic dynamics, which should in turn provide the reader with a broader context to understand and appreciate the results contained herein.

The relationship between Kolmogorov complexity and various notions of entropy was observed early in the development of algorithmic information theory.  Indeed, as early as 1970, merely five years after Kolmogorov first offered his definition of complexity in \cite{Kol65}, Zvonkin and Levin \cite{ZvoLev70} proved that if we consider binary sequences produced as the result of a sequence of independent, identically distributed random variables with probabilities $(p,1-p)$ for some computable $p\in(0,1)$, for $\mu$-almost every sequence $x\in\Sigma_2^\infty$, where $\mu$ is the Bernoulli measure on $\Sigma_2^\infty$ with parameters $(p,1-p)$, we have 
\[
\lim_{n\rightarrow\infty}\frac{K(x\uh n)}{n}=H(p)=-p\log(p)-(1-p)\log(1-p)\]
(in fact, Zvonkin and Levin showed the stronger result that the above expression holds for all sequences that are Martin-L\"of random with respect to the measure $\mu$; the details of this notion of randomness is not necessary for this study, but see, for instance, \cite[Chapter 3]{Nie09}, \cite[Chapter 6]{DowHir10}, or \cite[Chapter 3]{SheUspVer17}).  A similar result was obtained soon thereafter by Heim \cite{Hei79} using a variant of Kolmogorov complexity defined in terms of block codes.  Subsequent work by Brudno \cite{Bru82} and White \cite{Whi93} characterized the complexity of the codes of trajectories of points in a dynamical system on a compact space using a complexity measure defined explicitly in terms of the asymptotic behavior of $K(x\uh n)/n$ (which we do not define here), where $x$ is the code of a trajectory of a point in the space in question.  In this setting, for an ergodic measure $\mu$, the complexity for $\mu$-almost every point what shown to be equal to the Kolmogorov-Sinai entropy of $\mu$.

The first explicit results on algorithmic notions of fractal dimension are due to Lutz \cite{Lut00}, who defined an effective version of Hausdorff dimension applicable to both individual sequences and collections of sequences using certain betting strategies known as gales.  Soon after Lutz's initial contribution, Mayordomo \cite{May02} characterized Lutz's effective notion of Hausdorff dimension in terms of Kolmogorov complexity (a characterization we have used in our definition of effective dimension given at the end of the previous section).  In addition, Athreya, Hitchcock, Lutz, and Mayordomo \cite{AthHitLut04} defined an effective version of packing dimension and provided alternative characterizations of the notion in terms of both gales and Kolmogorov complexity (with the latter characterization is the definition of strong effective dimension given at the end of the previous section).  

With this initial framework of effective notions of fractal dimension in place, a considerable amount of work followed.  Highlights relevant to this study include:
\begin{itemize}
\item Simpson's work \cite{Sim15} on the effective Hausdorff dimension of subshifts $X\subseteq A^G$ (where $A$ is a finite set of symbols and $G=\mathbb{N}^d$ or $G=\mathbb{Z}^d$ for some $d\in\mathbb{N}$), which established that the effective dimension of $X$ is equal to the topological entropy of $X$;
\item the extension of effective notions of dimension to Euclidean space, for instance to study dimension of points in a self-similar fractal by Lutz and Mayorodomo \cite{LutMay08}, as discussed in the introduction, as well as the effective dimension of points in a random subfractal of a self-similar fractal \cite{GuLutMay14};
\item the improvement by Hoyrup \cite{Hoy12} of the original Zvonkin-Levin result described above to the context of computable shift-invariant, ergodic measures;
\item the study of the dimensions of points on various lines in the plane by N.\ Lutz and Stull \cite{LutStu17}, \cite{LutStu20};
\item the use of point-to-set principles for effective notions of dimension to characterize the classical dimension of various sets.
\end{itemize}
This latter point bears further unpacking.  As shown by Lutz and N.\ Lutz \cite{LutLut18}, for every $E\subseteq \mathbb{R}^n$, the classical Hausdorff dimension $\dim_H(E)$ can be calculated from an analogue of the effective dimension of the points in $E$ via the expression
\[
\dim_H(E)=\min_{A\subseteq \mathbb{N}}\sup_{x\in E}\dim^A(x),
\]
where $\dim^A(x)$ is the effective dimension of $x$ relative to the oracle $A$; a similar expression holds for classical packing dimension.  These point-to-set principles have proven to be powerful tools, allowing for the solution of open problems in classical fractal geometry, for instance, problems involving the intersections and products of fractals in Euclidean space due to N.\ Lutz \cite{Lut21} and a generalization of Marstand's projection theorem due to N.\ Lutz and Stull \cite{LutStu18}.  In short, the point-to-set principles have allowed for information-theoretic techniques to be imported into the study of fractal geometry to great effect.

The present study contributes to work in this area in several respects.  First, we introduce the machinery of unequal costs coding into the study of effective notions of fractal dimension, a new development in this area.  This study can thus be seen as providing a proof of concept of the utility of unequal costs coding in a relatively constrained setting, i.e., in the context of self-similar fractal trees, obtaining new results on the effective dimension of points in such trees in a way that makes fundamental use of aspects of unequal costs coding (namely, the relationship between the similarity dimension of a self-similar fractal and the channel capacity of an unequal costs code, as laid out in Theorem \ref{thm-dim-cc}), providing a more efficient approach than using previously developed techniques in this area.  

Second, in the constrained setting of self-similar fractal trees, we unearth new connections between the structure of such trees, the properties of the associated unequal cost codes, and certain symbolic dynamical features of directed graphs derived from these trees, connections that do not readily generalize to a broader setting in any obvious way.  In addition, this connection relies on a new result on the characteristic polynomial of adjacency matrices of the above-mentioned directed graphs, which we establish using ideas from spectral graph theory, an area with which the literature on algorithmic information theory has engaged very little. Moreover, previous work on the overlap between symbolic dynamics and algorithmic information theory does not explicitly discuss the significance of the Parry measure in this context, whereas the Parry measure features prominently in the discussion in Section \ref{sec-dynamics}.

Finally, we anticipate that the machinery of unequal costs coding will have a wider range of applicability in the study of effective fractal dimensions.  For instance, although the point-to-set principles are not needed to understand the structure of the self-similar fractal trees we consider here (as they are essentially finitary objects, being recursively generated from finite trees), if one were to consider a natural extension of self-similar fractal trees that involve a more general type of length function (for instance, one in which the cost of a symbol may vary depending on its location in a given sequence), this might allow one to obtain similar results with more general self-similar systems using sophisticated tools such as the point-to-set principles.  

With this context in mind, we now turn to the main results of this study.

\section{Coding with unequal costs and self-similar trees}\label{sec-coding}

\subsection{Unequal costs coding}\label{subsec-ucc}
Let us consider the finite alphabet $\Sigma_k$ for some fixed $k\geq 2$.  As defined in the information theory literature on coding with unequal costs, we define a length function to simply be a function $\ell:\Sigma_k\rightarrow\mathbb{N}$; see, for instance, \cite{Var71, Cha86, Abr94, Abr97, GolKenYou02}.  Note that we could consider a broader notion of length functions, for instance, length functions that are real-valued, length functions that are not defined merely in terms of individual symbols but may give a symbol varying cost depending on where it is located in a given string or depending on the symbols that precede it, and so on.  However, for our purposes, we only need the simple notion of a length function as defined above.   

A key notion in the study of coding with unequal letter costs that will be useful in this study is that of a channel capacity \cite{BeaBerMar10}, \cite[Chapter 4]{CsiKor11}.  For a length function $\ell$ defined on alphabet $\Sigma_k$, the \emph{channel capacity} associated to $\ell$ is the unique $\alpha\in\mathbb{R}^{\geq 0}$  that satisfies the expression
\begin{equation}\tag{$\dagger$}\label{eq1}
\sum_{i=0}^{k-1}2^{-\alpha\ell(i)}=1.
\end{equation}
Informally, the channel capacity of a length function measures the average amount of information transmitted per unit cost (see the discussion in \cite[Chapter 4]{CsiKor11}). It is not difficult to see that the channel capacity associated to $\ell$ is unique.  Indeed, if we set $r =2^{-\alpha}$, we can rewrite Equation (\ref{eq1}) as
\begin{equation*}
\sum_{i=0}^{k-1}r^{\ell(i)}=1.
\end{equation*}
As the equation $f(r)=\sum_{i=0}^{k-1}r^{\ell(i)}-1$ is strictly increasing on [0,1] with $f(0)<0$ and $f(1)>0$, uniqueness follows.  Observe more generally that the channel capacity $\alpha$ associated to a length function $\ell$ on $\Sigma_k$ allows use to define a Bernoulli measure $\mu_\ell$ on $\Sigma_k^\infty$ induced by setting, for each $i\in\Sigma_k$,
\[
\mu_\ell(i)=r^{\ell(i)},
\]
where $-\log(r)=\alpha$.  We will refer to $\mu_\ell$ as the \emph{measure derived from $\ell$}.

\subsection{Self-similar fractal trees}
We now turn to defining the similarity dimension of a self-similar fractal generated by a finite tree.  Let $T\subseteq\Sigma_m^*$ be a finite tree for some $m\in\mathbb{N}$.  Suppose that $T$ has $k$ terminal nodes $\tau_0,\dotsc,\tau_{k-1}$. 
We define an infinite, self-similar tree $T^*$ from $T$ simply by concatenating a copy of $T$ at each of its terminal nodes and then repeating this process recursively.  

Let us define $F_T=[T^*]$.  Clearly, $F_T$ is a self-similar fractal, as above each terminal node of $T$, we place a full copy of $F_T$; hereafter, we will refer to $F_T$ as the \emph{self-similar fractal tree generated by $T$}.  

Since each such copy of $F_T$ begins at some level of the tree deeper than the root, this amounts to scaling that copy.  In our context, this scaling factor is precisely the ratio of the size of the copy of $F_T$ to the size of $F_T$, which is precisely the similarity ratio from fractal geometry (see \cite[Section 9.2]{Fal04}).  Here, size is given by the standard metric $d$ on $\Sigma_m^\infty$ according to which two sequences $x,y\in\Sigma_m^\infty$ that agree on the first $j$ symbols but disagree on the $(j+1)$-st satisfy $d(x,y)=m^{-j}$.

Let us calculate the similarity ratios for these copies of $F_T$.  Given a terminal node $\tau$ in $T$, by concatenating a copy of $F_T$ at the end of $\tau$, this amounts to scaling $F$ by a factor of $m^{-|\tau|}$.  Thus, the similarity ratios corresponding to the terminal nodes $\tau_0,\dotsc,\tau_{k-1}$ are equal to $m^{-|\tau_0|},\dotsc,m^{-|\tau_{k-1}|}$, respectively.  Then the similarity dimension $\sdim(F)$ of $F$ is the unique real number $\beta$ satisfying
\begin{equation}\tag{$\dagger\dagger$}\label{eq2}
\sum_{i=0}^{k-1}m^{-\beta|\tau_i|}=1.
\end{equation}

\subsection{Relating length functions and self-similarity}
We now spell out how coding with unequal costs is related to the similarity dimension of a self-similar tree.

Given $T$ as above with terminal nodes $\tau_0,\dotsc,\tau_{k-1}$, we associate to each terminal node $\tau_i$ the symbol $i\in\Sigma_k$ and define a length function $\ell_T$ satisfying
\[
\ell_T(i)=|\tau_i|
\]
for each $i\in\{0,\dotsc,k-1\}$.  Hereafter, let us refer to $\ell_T$ as the \emph{length function induced by $T$}.  We then have the following.

\begin{thm}\label{thm-dim-cc}
Let $T\subseteq\Sigma_m^*$ be a finite tree for some $m\in\mathbb{N}$. Let $\ell_T$ be the length function induced by $T$ and $F_T$ the self-similar fractal generated by $T$.  Then the channel capacity of $\ell_T$ is equal to $\log(m)\cdot\sdim(F)$. In particular, in the case that $m=2$, the channel capacity of $\ell_T$ is equal to $\sdim(F_T)$.
\end{thm}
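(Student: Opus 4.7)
The plan is to show that the two defining equations, \eqref{eq1} for the channel capacity and \eqref{eq2} for the similarity dimension, are essentially the same equation written in different bases, and then invoke uniqueness to conclude.

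First I would write out \eqref{eq1} using the definition $\ell_T(i)=|\tau_i|$, obtaining
\[
\sum_{i=0}^{k-1} 2^{-\alpha|\tau_i|} = 1.
\]
Then I would perform the base change via the identity $2 = m^{1/\log(m)}$, which rewrites each summand as $2^{-\alpha|\tau_i|} = m^{-(\alpha/\log(m))|\tau_i|}$, yielding
\[
\sum_{i=0}^{k-1} m^{-(\alpha/\log(m))|\tau_i|} = 1.
\]
Comparing with \eqref{eq2}, the quantity $\alpha/\log(m)$ satisfies precisely the equation defining $\sdim(F_T)$.

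To close the argument I would invoke uniqueness of the solution to \eqref{eq2}. This is essentially the same monotonicity observation used just after \eqref{eq1}: setting $s = m^{-\beta}$, the function $g(s) = \sum_{i=0}^{k-1} s^{|\tau_i|} - 1$ is strictly increasing on $[0,1]$ with $g(0) < 0$ and $g(1) > 0$ (using $k \geq 2$), so there is a unique $\beta$ satisfying \eqref{eq2}. Hence $\sdim(F_T) = \alpha/\log(m)$, i.e., $\alpha = \log(m)\cdot\sdim(F_T)$. The case $m=2$ is then immediate since $\log(2)=1$.

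There is no real obstacle here: the content of the theorem is the identification $\ell_T(i) = |\tau_i|$ together with a change-of-base manipulation, and the work has already been done in the uniqueness arguments appearing in the definitions. The only point to be careful about is making explicit that the uniqueness argument given for $\alpha$ applies equally well to $\beta$, which is a one-line observation.
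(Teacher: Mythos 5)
Your proposal is correct and follows essentially the same route as the paper's proof: substitute $\ell_T(i)=|\tau_i|$ into the channel-capacity equation, change the base of the exponential from $2$ to $m$, and identify the result with the defining equation for $\sdim(F_T)$ via uniqueness. Your explicit remark that the monotonicity/uniqueness argument applies to $\beta$ as well as $\alpha$ is a small but welcome addition that the paper leaves implicit.
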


\begin{proof}
From Equation (\ref{eq1}), the channel capacity $\alpha$ of $\ell_T$ satisfies
\[
\sum_{i=0}^{k-1}2^{-\alpha\ell_T(i)}=1.
\]
Since $\ell_T(i)=|\tau_i|$ for $i\in\Sigma_k$, changing the base of the exponential from 2 to $m$, we can rewrite above equation as
\[
\sum_{i=0}^{k-1}m^{-\frac{\alpha}{\log(m)}|\tau_i|}=1.
\]
But from Equation (\ref{eq2}), $\log(m)\cdot\sdim(F_T)$ is the unique solution to the above equation, and hence we can conclude that $\log(m)\cdot\sdim(F_T)=\alpha$.
\end{proof}

\subsection{Deriving the Lutz/Mayordomo formulas for self-similar trees}

The observation that the similarity dimension of a self-similar $F\subseteq \Sigma_m^\infty$ generated by a finite tree $T\subseteq \Sigma_m^*$ is equal to the channel capacity of the unequal costs code determined by $T$ allows us to derive analogues of the Lutz/Mayordomo formulas for the effective dimension of points in self-similar fractals as stated in the introduction by means of a relatively straightforward proof.

First, we need to define one additional notion.  Given a finite tree $T\subseteq\Sigma_m^*$ with $k$ terminal nodes $S=\{\tau_0,\dotsc,\tau_{k-1}\}$ for some $m,k\in\N$ and the corresponding self-similar fractal $F_T$, we define a one-to-one correspondence $\Psi$ between elements of $F_T$ and elements of $\Sigma_k^\infty$ as follows. First we define a coding map $\psi:S\rightarrow\Sigma_k$ by setting $\psi(\tau_i)=i$ for $i\in\Sigma_k$.  We extend $\psi$ to a functional $\Psi: F_T\rightarrow\Sigma_k^\infty$ as follows.  Given $x\in F_T$, there is a sequence $(n_i)_{i\in\N}\subseteq\N$ with $n_0=0$ such that for each $i\in\N$, $x\uh[n_i,n_{i+1})$ is a terminal node of $T$; let us call this the \emph{$T$-sequence of $x$}.  Note that since $S$ is prefix-free, the $T$-sequence of each $x\in F_T$ is unique. We then define $\Psi_T$ by setting
\begin{multline*}
\Psi_T(x)=\psi(x\uh [n_0,n_1))^\frown\psi(x\uh [n_1,n_2))^\frown\cdots \\
^\frown\psi(x\uh[n_i,n_{i+1}))^\frown\cdots
\end{multline*}
Setting $y=\Psi_T(x)$, we will hereafter refer to $y$ as the \emph{coding sequence of $x$}. Note that we can similarly extend $\psi$ to a function $\hat\psi:S^*\rightarrow\Sigma_k^*$, which we will use shortly.  We can now state the analogue of the Lutz/Mayordomo formulas in our context.

\begin{thm}\label{thm-lm}
Let $T\subseteq\Sigma_m^*$ be a finite tree with $k$ terminal nodes for some $m,k\in\N$ with corresponding length function $\ell=\ell_T$, self-similar fractal $F_T$, and the measure $\mu_\ell$ derived from $\ell$.  Then for $x\in F_T$,
\[
\dim(x)=\sdim(F_T)\dim^{\mu_{\ell}}(y) 
\]
and
\[
\ddim(x)=\sdim(F_T)\ddim^{\mu_{\ell}}(y),
\]
where $y\in\Sigma_k^\infty$ is the coding sequence of $x\in F_T$.
\end{thm}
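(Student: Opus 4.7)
The plan is to transport the comparison between $\dim(x)$ and $\dim^{\mu_\ell}(y)$ through the bijection $\Psi_T$ and extract the factor $\sdim(F_T)$ using Theorem \ref{thm-dim-cc}. First I would observe that for each $j\in\N$, the prefix $y\uh j$ of the coding sequence of $x$ determines (and is determined by) the prefix $x\uh{n_j}$ via the computable maps $\hat\psi$ and its inverse. Parsing $x\uh{n_j}$ as a concatenation of codewords in $S$ is unambiguous because $S$ is prefix-free, so both translations are total and computable, yielding
\[
K(x\uh{n_j}) = K(y\uh j) + O(1).
\]

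Next I would evaluate $\mu_\ell$ on initial segments of $y$. Because $\mu_\ell$ is the Bernoulli measure with $\mu_\ell(i) = 2^{-\alpha\ell_T(i)}$, where $\alpha$ is the channel capacity of $\ell_T$, a direct computation gives
\[
\mu_\ell(y\uh j) = \prod_{t<j} 2^{-\alpha\ell_T(y(t))} = 2^{-\alpha n_j},
\]
so $-\log\mu_\ell(y\uh j) = \alpha\, n_j$. Theorem \ref{thm-dim-cc} rewrites this as $\log(m)\cdot\sdim(F_T)\cdot n_j$. Combining with the Kolmogorov complexity identity above,
\[
\sdim(F_T)\cdot\frac{K(y\uh j)}{-\log\mu_\ell(y\uh j)} = \frac{K(x\uh{n_j}) + O(1)}{n_j\log m}.
\]
Taking $\liminf$ (respectively $\limsup$) in $j$ on both sides produces $\sdim(F_T)\dim^{\mu_\ell}(y)$ on the left and the corresponding limit of $K(x\uh{n_j})/(n_j\log m)$ along the breakpoint subsequence $(n_j)$ on the right.

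What remains is to upgrade the limit along $(n_j)$ to a limit over all $n\in\N$. Since $T$ is a fixed finite tree, $L := \max_{i<k}|\tau_i|$ is a constant, and for any $n$ the unique $j$ with $n_j\leq n < n_{j+1}$ satisfies $n - n_j < L$. Then $x\uh n$ differs from $x\uh{n_j}$ by a suffix of length less than $L$ over $\Sigma_m$, which may be described in $O(1)$ bits in either direction, giving $|K(x\uh n) - K(x\uh{n_j})| = O(1)$; combined with $n_j/n \to 1$, the $\liminf$ and $\limsup$ over $(n_j)$ coincide with the $\liminf$ and $\limsup$ over all $n$. This delivers both displayed formulas simultaneously, since the strong dimension argument is identical with $\limsup$ replacing $\liminf$ throughout.

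The main obstacle is this last bookkeeping step: \emph{a priori} $\dim(x)$ and $\ddim(x)$ are computed from the full sequence of prefixes of $x$, while the coding bijection only naturally relates prefixes at the breakpoints $(n_j)$. The finiteness of $T$, and hence the uniform boundedness of the gaps $n_{j+1} - n_j$ by $L$, is exactly the ingredient that makes this mismatch absorb into additive $O(1)$ error in $K$ and a multiplicative $(1+o(1))$ correction in the denominator, so the $\liminf$/$\limsup$ values are preserved.
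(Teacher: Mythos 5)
Your proposal is correct and follows essentially the same route as the paper: the same translation claim $K(x\uh {n_j})=K(y\uh j)+O(1)$ via the computable bijection $\hat\psi$, the same computation $-\log\mu_\ell(y\uh j)=\alpha n_j$ combined with Theorem \ref{thm-dim-cc}, and the same passage from the breakpoint subsequence $(n_j)$ to all $n$ using the bounded gap $n_{j+1}-n_j\leq L$. The only cosmetic difference is that you handle intermediate $n$ with a single two-sided $O(1)$ bound on the complexity, whereas the paper runs two separate one-sided subadditivity arguments (via proper suffixes of terminal nodes for $\dim$ and proper prefixes for $\ddim$); both rest on the same finiteness observation.
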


\begin{proof}

Let $S=\{\tau_0,\dotsc,\tau_k\}$ be the set of terminal nodes of $T$, and let $\hat\psi:S^*\rightarrow\Sigma_k^*$ be the extension of $\psi$ as discussed above.  Note that $\ell(i)=|\tau_i|$ for $i\in\Sigma_k$. Given $x\in F_T$, let $(n_i)_{i\in\N}\subseteq\N$ be the $T$-sequence of $x$ and $y\in\Sigma_k^\infty$ the coding sequence for $x$.  We first establish the following claim:

\medskip
\noindent \emph{Claim:} For $j\in \N$, $K(x\uh {n_j})=K(y\uh j)+O(1)$.

\medskip

The argument to establish this claim is fairly routine, but we include it here for the sake of completeness. First note that $\hat\psi(x\uh n_j)=y\uh j$.   Indeed, since the $T$-sequence of $x$ is unique, we calculate the value $\hat\psi(x\uh n_j)$ first by decomposing $x\uh n_j$ into the concatenation of $j$ elements of $S$.  Applying $\psi$ to each of these in turn and concatenating the outputs yields the string $y\uh j$.  

Next we define a machine $M:\Sigma_2^*\rightarrow\Sigma_m^*$ as follows.  For $\sigma\in\Sigma_2^*$, if $U(\sigma)\halts$ and $U(\sigma)$ can be written as the concatenation of members of $S$ (so that $\hat\psi(U(\sigma))$ is defined), then we set $M(\sigma)=\hat\psi(U(\sigma))$.  In particular,   $U(\sigma)\halts=x\uh n_j$ if and only if $M(\sigma)\halts=y\uh j$.  By the optimality of $K$, we have $K(y\uh j)\leq K_M(y \uh j)+O(1)\leq K(x\uh n_j)+O(1)$.  Similarly, as $\hat\psi^{-1}$ is invertible, we can use $\hat\psi^{-1}$ we can establish that $K(x\uh n_j)\leq K(y\uh j)+O(1)$, and so we have established the claim.

There exists a sequence $\tau_{i_0},\dotsc,\tau_{i_{j-1}}$ of elements of $S$ such that $x\uh n_j={\tau_{i_0}}^\frown \dotsc^\frown\tau_{i_{j-1}}$.  Then if $-\log(r)$ is the channel capacity associated to $\ell$,
\begin{equation}\label{eq-length}
\begin{split}
n_j=\sum_{e=0}^{j-1}|\tau_{i_e}|&=\sum_{e=0}^{j-1}\ell(i_e)=\ell(y\uh j)\\
&=\left(\frac{1}{-\log(r)}\right)\left(-\log\Bigl(r^{\ell(y\uh j)}\Bigr)\right)\\
&=\left(\frac{1}{-\log(r)}\right)\left(-\log(\mu_\ell(y\uh j))\right).
\end{split}
\end{equation}
Then we have
\begin{align*}
\frac{K(x\uh {n_j})}{n_j\log(m)}&=\frac{K(y\uh j)+O(1)}{n_j\log(m)}\\
&=\left(\frac{-\log(r)}{\log(m)}\right)\frac{K(y\uh j)+O(1)}{-\log(\mu_\ell(y\uh j))}\\
&=\sdim(F_T)\frac{K(y\uh j)+O(1)}{-\log(\mu_\ell(y\uh j))},
\end{align*}
where the first equality follows from the above claim, the second equality follows from Equation (\ref{eq-length}), and the third equality follows from the fact that  $-\log(r)$ is the channel capacity of $\ell$ and $-\log(r)=\log(m)\cdot\sdim(F_T)$ by Theorem \ref{thm-lm}.
Taking the limit infimum and limit supremum of both sides of this equality yields 
\begin{equation}\label{eq-a}
\liminf_{j\rightarrow\infty}\frac{K(x\uh {n_j})}{n_j\log(m)}=\sdim(F_T)\dim^{\mu_\ell}(y)
\end{equation}
and
\begin{equation}\label{eq-b}
\limsup_{j\rightarrow\infty}\frac{K(x\uh {n_j})}{n_j\log(m)}=\sdim(F_T)\ddim^{\mu_\ell}(y).
\end{equation}

\noindent Moreover, we have
\[
\dim(x)=\liminf_{n\rightarrow\infty}\frac{K(x\uh n)}{n\log(m)}\leq\liminf_{j\rightarrow\infty}\frac{K(x\uh {n_j})}{n_j\log(m)}
\]
and
\[
\limsup_{j\rightarrow\infty}\frac{K(x\uh {n_j})}{n_j\log(m)}\leq\limsup_{n\rightarrow\infty}\frac{K(x\uh n)}{n\log(m)}=\ddim(x),
\]
from which, combined with Equations (\ref{eq-a}) and (\ref{eq-b}), we conclude 
\begin{equation}\label{eq-c}
\dim(x)\leq \sdim(F_T)\dim^{\mu_\ell}(y)
\end{equation}
and
\begin{equation}\label{eq-d}
\sdim(F_T)\dim^{\mu_\ell}(y)\leq \ddim(x).
\end{equation}

To conclude the proof, we have to consider the values $\frac{K(x\uh n)}{n}$ for $n\in (n_j,n_{j+1})$ for $j\in\N$.  We use the subadditivity of $K$:  there is some $c\in\N$ such that for all $\sigma,\tau\in \Sigma_k^*$,  $K(\sigma^\frown\tau)\leq K(\sigma)+K(\tau)+c$. Let $S_0$ consist of all proper suffixes of the terminal nodes of $T$, and we let $s_0=\max\{K(\sigma)\colon\sigma\in S_0\}$.  Given $n\in (n_j,n_{j+1})$, we can write $n=n_j+i$ for some $i< n_{j+1}-n_j$.  Then
\begin{align*}
&\frac{K(x\uh n_{j+1})}{n_{j+1}}\leq\\
&\frac{K(x\uh (n_j+i))+K(x\uh[n_j+i,n_{j+1}))+c}{n_{j+1}}\leq\\
 &\frac{K(x\uh (n_j+i))+c+s_0}{n_{j+1}},
\end{align*}
where the first inequality follows from the subadditivity of $K$ and the second from the fact that $x\uh[n_j+i,n_{j+1}))\in S_0$.
Rearranging yields
\begin{align*}
\frac{K(x\uh n_{j+1})-O(1)}{n_{j+1}}&\leq\frac{K(x\uh (n_j+i))}{n_{j+1}}\\
& \leq \frac{K(x\uh (n_j+i))}{n_j+i}=\frac{K(x\uh n)}{n}.
\end{align*}
Thus, for all $n$ such that $n\neq n_j$ for all $j$, there is some $j$ such that $n\in(n_j,n_{j+1})$ for which 
\[
\frac{K(x\uh n_j)-O(1)}{n_j}\leq\frac{K(x\uh n)}{n}.
\]
Applying the limit infimum of both sides and applying Equation (\ref{eq-a}), we get
\begin{equation*}
\sdim(F_T)\dim^{\mu_\ell}(y)\leq \dim(x),
\end{equation*}
which, combined with Equation (\ref{eq-c}), gives us
\[
\dim(x)=\sdim(F_T)\dim^{\mu_\ell}(y).
\]

Similarly, let $S_1$ consist of all proper initial segments of the terminal nodes of $T$.  Then we let $s_1=\max\{K(\sigma)\colon\sigma\in S_1\}$.  Given $n\in (n_j,n_{j+1})$ with $n=n_j+i$ for some $i< n_{j+1}-n_j$ as above, we have
\begin{align*}
\frac{K(x\uh n)}{n}&=\frac{K(x\uh (n_j +i))}{n_j+i}\\
&\leq \frac{K(x\uh n_j)+K(x\uh[n_j,n_j+i))+ c}{n_j+i}\\
&\leq \frac{K(x\uh n_j)+c+s_1}{n_j+i}\\
&\leq \frac{K(x\uh n_j)+O(1)}{n_j}
\end{align*}
where the first inequality follows from the subadditivity of $K$ and the second from the fact that $x\uh[n_j,n_j+i)\in S_1$.  Thus, for all $n$ such that $n\neq n_j$ for all $j$, there is some $j$ such that $n\in(n_j,n_{j+1})$ for which 
\[
\frac{K(x\uh n)}{n}\leq \frac{K(x\uh n_j)+ O(1)}{n_j}.
\]
Thus 
\begin{align*}
\ddim(x)&=\limsup_{n\rightarrow\infty}\frac{K(x\uh n)}{n\log(m)}\\
&\leq \limsup_{n\rightarrow\infty}\frac{K(x\uh n_j)+O(1)}{n_j\log(m)}
\end{align*}
which, combined with Equation (\ref{eq-d}), gives us
\[
\ddim(x)=\sdim(F_T)\ddim^{\mu_\ell}(y).
\]

\end{proof}

\section{A dynamical systems perspective on self-similar fractal trees}\label{sec-dynamics}

\subsection{Self-similar trees and path sets} We next take a dynamical systems perspective on self-similar fractal trees generated by a finite tree.  In this context, such a self-similar tree can be seen as coding the set of one-sided infinite walks through a certain directed graph, where every walk begins at a distinguished node, providing an instance of what has been referred to as a \emph{path set} by Abrams and Lagarias in \cite{AbrLag14}.  

Fix $n\in\N$. A \emph{pointed graph} $(\G, v)$ over $\Sigma_n$ consists of an edge-labeled finite directed graph $\G = (G,\E)$, where $G=(V,E)$ is a directed graph (in which loops and multiple edges are permitted) with a distinguished vertex $v\in V$ and $\E\subseteq E\times \Sigma_n$ (i.e., the edges of $G$ are labeled with elements of $\Sigma_n$).  We further require that  the labels on different edges between any two fixed vertices must be distinct.  We then define the \emph{path set} $P=X_\G(v)\subseteq \Sigma_n^\infty$ given by the pointed graph $\G$ to be the set of infinite sequences obtained from the edge labels of all possible one-sided infinite walks in $G$ that begin at the distinguished vertex $v$.

Path sets are more general than the sets of paths through self-similar fractal trees that we are considering here, but we can characterize the fractals under consideration in terms of a restricted class of path sets.  First, note that a path set can be obtained from different pointed graphs; we shall refer to an underlying pointed graph for a path set $P$ as a \emph{presentation} of $P$. A directed graph $G$ with edge labels $\E$ is \emph{right-resolving} if for every vertex $v$ in $G$, all directed edges starting from $v$ have distinct edge labels.  In addition, a directed graph $G$  is \emph{irreducible} (or \emph{strongly connected}) if for every pair of vertices $u$ and $v$ in $G$, there is a directed walk in $G$ from $u$ to $v$.  It can be shown that every path set has a right-resolving presentation, but not every path set has an irreducible presentation (see \cite{AbrLag14}).  

\begin{thm}\label{thm-treegraph}
Fix $m\in\N$.
\begin{itemize}
\item[(i)] Every self-similar fractal tree $F$ generated by a finite tree $T\subseteq \Sigma_m^*$ can be obtained as a path set with a right-resolving, irreducible presentation.
\item[(ii)] Every path set over $\Sigma_m$  with a right-resolving, irreducible presentation in which (a) every multiple edge terminates in the distinguished vertex and (b) every cycle contains the distinguished vertex is a self-similar fractal tree generated by some finite tree $T$.
\end{itemize}
\end{thm}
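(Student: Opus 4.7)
For part (i), I would construct the pointed graph $(\G,v)$ directly from $T$. Take the vertex set $V$ to be the non-terminal nodes of $T$, let the distinguished vertex $v$ be the root, and for each non-terminal $\sigma\in T$ and each $i\in\Sigma_m$ with $\sigma^\frown i\in T$, add an edge labeled $i$ from $\sigma$ to $\sigma^\frown i$ when $\sigma^\frown i$ is non-terminal and from $\sigma$ back to $v$ when $\sigma^\frown i$ is terminal.  The right-resolving property is immediate, and irreducibility follows because from any non-terminal $\sigma$ we can descend until hitting a terminal node (arriving at $v$), and from $v$ we can reach any non-terminal $\sigma'$ by tracing its defining label sequence. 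In this graph, any multi-edge from a source $\sigma$ arises only when $\sigma$ has two children in $T$ that are both terminal, so every multi-edge terminates at $v$, verifying condition (a); condition (b) follows because within $V\setminus\{v\}$ the string length strictly increases along each edge. The infinite walks from $v$ correspond precisely to infinite concatenations of terminal nodes of $T$, i.e., to the elements of $F_T=[T^*]$.

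For part (ii), given $(\G,v)$ satisfying the hypotheses, the plan is to recover $T$ from the structure of first-return walks to $v$. Condition (b) implies that the subgraph induced on $V\setminus\{v\}$ is acyclic, so in the finite graph $G$ every walk that avoids $v$ in its interior has bounded length. Let $L\subseteq\Sigma_m^*$ be the set of label sequences of walks in $G$ starting and ending at $v$ with no intermediate visit to $v$; this set is finite by the preceding bound. The right-resolving property at $v$ combined with the first-return condition makes $L$ prefix-free: were $\sigma\prec\tau$ both in $L$, the unique walk realizing $\tau$ would be forced through $v$ at position $|\sigma|$, contradicting $\tau\in L$. Let $T$ be the prefix-closure of $L$; this is a finite tree whose terminal nodes are exactly $L$.

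To conclude that $X_\G(v)=F_T$, the crucial observation is that every infinite walk from $v$ visits $v$ infinitely often. Otherwise some tail of the walk would live entirely in $V\setminus\{v\}$, producing an infinite walk in a finite acyclic graph, which is impossible. Granted this, any infinite walk from $v$ decomposes at its visits to $v$ into a concatenation of first-return walks, so its label sequence lies in $F_T$. Conversely, any element of $F_T$ admits a unique decomposition $\lambda_1\lambda_2\cdots$ into elements of $L$ by prefix-freeness, and the corresponding first-return walks splice together into a walk from $v$ realizing the given label sequence, with condition (a) serving to rule out graph configurations that would obstruct this identification.

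The main obstacle will be the step showing that every infinite walk from $v$ must revisit $v$ infinitely often, which leans crucially on the acyclicity of the subgraph on $V\setminus\{v\}$ provided by condition (b) together with the finiteness of $G$. The remainder is essentially a translation between the graph picture and the tree picture, with the right-resolving property and the hypotheses (a) and (b) working together to ensure that the identification of $L$ with the set of terminal nodes of $T$ is well-defined and complete.
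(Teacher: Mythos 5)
Your proof of part (i) is the same construction as the paper's (vertices are the non-terminal nodes, the root is distinguished, edges into terminal children are redirected back to the root), with the bonus that you actually verify right-resolving, irreducibility, and conditions (a) and (b), which the paper leaves implicit. For part (ii) you take a genuinely different route. The paper performs a graph surgery: it detaches every edge terminating at the distinguished vertex $v$ and reattaches it to a fresh leaf, then shows the resulting graph is a tree, using (a) to exclude multiple edges, (b) to exclude cycles, and irreducibility for connectedness; the desired $T$ is read off from this tree, and the equality of the path set with $F_T$ is declared routine. You instead define the terminal-node set directly as the first-return language $L$ at $v$ and prove the language-theoretic facts that make the identification work: $L$ is finite and prefix-free (via acyclicity off $v$ and the right-resolving property), every infinite walk from $v$ returns to $v$ infinitely often, and hence the path set is exactly the set of infinite concatenations of elements of $L$, i.e.\ $F_T$. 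The two constructions yield the same $T$ (the paper's new leaves correspond exactly to your first-return words), but your version supplies the decomposition argument the paper omits, which is arguably the mathematical content of the equivalence. One remark: your closing claim that condition (a) "serves to rule out graph configurations that would obstruct this identification" is not substantiated, and in fact nothing in your argument uses (a) --- finiteness and prefix-freeness of $L$ and the walk decomposition go through without it (parallel edges not ending at $v$ merely give distinct first-return words sharing a realizing vertex sequence). This is not an error, since (a) is then simply an unused hypothesis, but you should either locate where (a) is needed or state explicitly that your proof does not require it; by contrast, the paper's surgery argument does invoke (a) to ensure the modified graph has no multiple edges and hence is literally a tree.
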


\begin{proof}
(i) First we show how to associate a pointed directed graph $\mathcal{G}$ with a given finite tree $T$.  First let $n$ be the number of non-terminal nodes in $T$, written $\sigma_1,\dotsc,\sigma_n$ in length-lexicographic order, so that $\sigma_1$ is the root of the tree (we begin counting at 1 instead of 0 to align with conventions from linear algebra that we will use shortly).  For each such node $\sigma_i$, we will have a corresponding vertex $v_i$ in $G$; the vertex $v_1$ corresponding to $\sigma_1$ will be the distinguished vertex.  Moreover, for each pair of nodes $\sigma_i$ and $\sigma_j$ such that $\sigma_i$ is a parent of $\sigma_j$, we include in $G$ a directed edge from $v_i$ to $v_j$ with the same label on the edge from $\sigma_i$ to $\sigma_j$.  Lastly, for each node $\sigma_i$ that is the parent of a terminal node, we will include in $G$ one directed edge from $v_i$ to $v_1$ for each terminal node connected to $\sigma_i$, labeling the directed edge with the same label on the edge connecting $\sigma_i$ to the corresponding terminal node in $T$.  It is not difficult to verify that the set of infinite paths $F$ through $T^*$ is equal to the set of infinite one-way walks through $G_T$ that begin at $v_1$.

\bigskip

\noindent (ii) Suppose $\G=(G,v)$ is a right-resolving, irreducible directed graph satisfying the conditions (a) and (b) as above.  The construction of the tree $T$ is straightfoward.  Let $u$ be a vertex for which there is a directed edge that has the distinguished vertex $v$ as its terminal vertex. For each such directed edge, we disconnect it from $v$ and connect it to a new vertex $w$ that is not connected to any other vertex in the graph, keeping the same edge label as in the original directed graph (note that there may be multiple new vertices connected to $u$ to which we perform this operation).  Repeating this process for all such vertices $u$ yields a new pointed graph $\G'=(G',v)$. We claim that the underlying graph of $G'$ is a tree with root $v$.  

First, note that there are no multiple edges in $G'$, since by condition (a), the only multiple edges in $G$ have been removed and replaced with new edges terminating in distinct vertices.  Second, we claim that $G'$ is acyclic.  Indeed, if there were a cycle in $G'$, then it would be a cycle in $G$ (since no cycles are created in transforming $G$ to $G'$).  By condition (b), this cycle contains the distinguished vertex $v$.  But this contradicts the fact that no directed edge terminating in $v$ in $G$ is contained in $G'$ by our construction.  Finally, since $G$ is irreducible, $G'$ is connected (but not strongly connected), since the transformation from $G$ to $G'$ only results in the removing of cycles, keeping all other edge relations intact.  If we take $T$ to be the underlying graph of $G'$ (removing the orientation of the directed edges), this is the desired tree $T$.  As in the proof of (i), it is routine to verify that the set of infinite one-way walks through $\G$ beginning at $v$ is equal to the set of infinite paths $F$ through $T^*$.
\end{proof}

See Figure \ref{img1} for an example of a finite tree and its corresponding pointed directed graph.

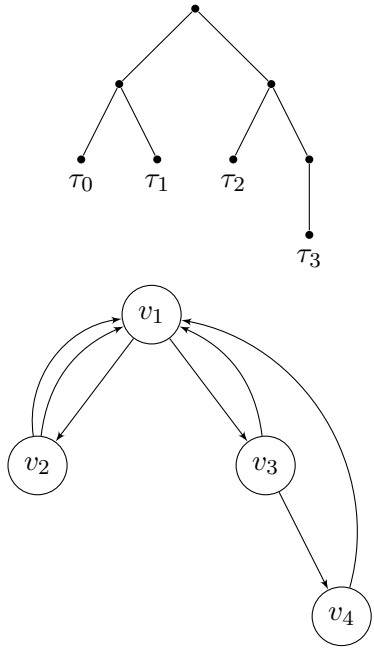
\begin{figure}[h]

\begin{center}

\tikzstyle{level 1}=[level distance=1cm, sibling distance=2cm]
\tikzstyle{level 2}=[level distance=1cm, sibling distance=1cm]

\tikzstyle{nodot} = [text width=4em, text centered]
\tikzstyle{dotted} = [circle, minimum width=3pt,fill, inner sep=0pt]
\begin{tikzpicture}[grow=down, sloped]
\node[dotted] {}
    child {
        node[dotted] {}        
        child {
            node[dotted,label=below:{$\tau_0$}] {}
            edge from parent
            node[above] {}
            node[below]  {}
        }
        child {
            node[dotted,label=below:{$\tau_1$}] {}
            edge from parent
            node[above] {}
            node[below]  {}
        }
        edge from parent 
        node[above] {}
        node[below]  {}
}
child {
    node[dotted] {}        
    child {
            node[dotted, label=below:{$\tau_2$}] {}
            edge from parent
            node[above] {}
            node[below]  {}
        }
        child {
            node[dotted] {}
                         child {
            node[dotted, label=below:{$\tau_3$}] {}
            edge from parent 
            node[above] {}
            node[below]  {}
            }
                edge from parent
                node[above] {}
                node[below]  {}
            }
        edge from parent         
            node[above] {}
            node[below]  {}
    };
\end{tikzpicture}

  \end{center}

\begin{center}
  \begin{tikzpicture}

\tikzset{vertex/.style = {shape=circle,draw,minimum size=1.5em}}
\tikzset{edge/.style = {->,> = latex'}}
\node[vertex] (a) at  (0,0) {$v_2$};
\node[vertex] (b) at  (1.5,2) {$v_1$};
\node[vertex] (c) at  (3,0) {$v_3$};
\node[vertex] (d) at  (4,-2) {$v_4$};
edges
\draw[edge] (b) to (a);
\draw[edge] (b) to (c);
\draw[edge] (c) to (d);
\draw[edge] (a)  to[bend left] (b);
\draw[edge] (a) to[bend left = 45] (b);

\draw[edge] (c) to[bend right] (b);

\draw[edge] (d)  to[bend right=47.5] (b);

%
%
%
\end{tikzpicture}
  \end{center}

\caption{A finite tree $T$ and its corresponding pointed directed graph $G$}
  \label{img1}

\end{figure}

\subsection{Spectral analysis of the pointed graph associated with a finite tree}\label{subsec-spectral}

The relationship between self-similar fractal trees and path sets bears further fruit for us, particularly when we study the measure $\mu_\ell$ associated with the length function $\ell_T$ derived from a finite tree $T$.  In particular, for a given finite tree $T$ and associated directed graph $G$, we can derive the Perron eigenvalue associated with $G$ (which we define below) solely in terms of properties of the tree $T$.  From this Perron eigenvalue, we can derive the so-called Parry measure on the set of infinite walks through $G$ that begin at any vertex.  We will further show the measure $\mu_\ell$ of the collection of coding sequences for members of $F_T$ can be derived from this Parry measure.
 
Fix $m\in\N$.  Given a finite binary tree $T\subseteq\Sigma_m^*$ with $n$ non-terminal nodes, let $\mathcal{G}=(G,\mathcal{E})$ be the corresponding pointed directed graph on $n$ vertices $v_1,\dotsc,v_n$ as in the proof of Theorem \ref{thm-treegraph}(i).  Let $A$ be the adjacency matrix of  $G$, i.e., the $n\times n$ matrix where for $1\leq i,j\leq n$, $a_{i,j}$ is the number of directed edges from vertex $v_i$ to $v_j$ in $G$.  Since the entries of $A$ are non-negative and the graph $G$ is irreducible (so that the matrix $A$ is irreducible), by the Perron-Frobenius theorem (see, e.g., \cite[Theorem 4.2.3]{LinMar21}), $A$ has a largest positive real eigenvalue $\rho$ (the so-called \emph{spectral radius} of $A$). As noted in the introduction, we can show that $\log(\rho)$ is equal to the channel capacity of $\ell_T$, the length function induced by $T$, which we obtain using the following theorem.

\begin{thm}\label{thm-spectral}
Given a finite tree $T$ with $n$ non-terminal nodes, let $A$ be the $n\times n$ adjacency matrix of the directed graph associated to $T$, and let
$p_A(z)=z^n+c_1z^{n-1}+\dotsc+c_{n-1}z+c_n$ be the characteristic polynomial of $A$.  Then for $i=1,\dotsc,n$,
$-c_i$ is equal to the number of terminal nodes in $T$ of depth $i$.
\end{thm}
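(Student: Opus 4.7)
The plan is to expand the characteristic polynomial via principal minors. I would invoke the standard identity
\[
c_i = (-1)^i \sum_{|S|=i} \det(A_S),
\]
where $S$ ranges over $i$-element subsets of $\{v_1,\dots,v_n\}$ and $A_S$ is the corresponding principal submatrix, and then use the Leibniz expansion $\det(A_S) = \sum_{\pi} \mathrm{sgn}(\pi)\prod_{s\in S} a_{s,\pi(s)}$. The contributing permutations $\pi$ are exactly those whose cycle decomposition corresponds to a vertex-disjoint directed-cycle cover of $S$ in $G$ (fixed points allowed only where a self-loop exists), and each cover contributes the product of cycle signs times the product of edge multiplicities along its edges.

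The structural fact driving the proof is that \emph{every directed cycle in $G$ passes through $v_1$}. I would establish this by recalling that the non-terminal nodes of $T$ are enumerated in length-lexicographic order, so a tree-edge $v_i \to v_j$ between non-terminals satisfies $|\sigma_i|<|\sigma_j|$ and hence $i<j$: these edges strictly increase the vertex index. The only edges violating this monotonicity are the loop-back edges installed for terminal children of a non-terminal, and all such edges terminate at $v_1$. A directed cycle cannot consist solely of strictly index-increasing edges, so it must include at least one loop-back edge and therefore contains $v_1$. Consequently, a vertex-disjoint cycle cover of $S$ can contain at most one cycle (any two cycles would share $v_1$), so the only terms that can contribute to $\det(A_S)$ come from either (i) $S=\{v_1\}$ using a self-loop at $v_1$, or (ii) a single directed cycle of length $i$ through $v_1$ that visits exactly the vertices of $S$, contributing sign $(-1)^{i-1}$ times the product of edge multiplicities.

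The next step is to biject simple directed cycles of length $i$ through $v_1$ in $G$ (counted with edge-multiplicity) with terminal nodes of $T$ at depth $i$. Such a cycle has the form $v_1 \to v_{j_1} \to \dots \to v_{j_{i-1}} \to v_1$ in which the first $i-1$ edges are tree-edges descending from the root down to a non-terminal $\sigma_{j_{i-1}}$ at depth $i-1$, and the final step is a loop-back edge. The descending portion is uniquely determined by $\sigma_{j_{i-1}}$ (at most one tree-edge lies between any prescribed pair of non-terminals), while the loop-back edge may be chosen in $a_{j_{i-1},1}$ ways, one for each terminal child of $\sigma_{j_{i-1}}$. Summing over the internal nodes of $T$, the total count is exactly the number $t_i$ of terminal nodes of depth $i$. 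The case $i=1$ fits uniformly into this scheme: a self-loop at $v_1$ is the length-$1$ cycle through $v_1$, and $a_{1,1}$ counts the terminal children of the root, i.e.\ the terminal nodes of depth $1$.

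Combining these ingredients gives $\sum_{|S|=i}\det(A_S) = (-1)^{i-1} t_i$ and hence $c_i = (-1)^i(-1)^{i-1} t_i = -t_i$, proving $-c_i = t_i$. The main bookkeeping hazard I anticipate is handling the parallel loop-back edges in the multigraph correctly so that their multiplicities feed into the Leibniz product without double- or under-counting, and making sure self-loops at $v_1$ are ruled out from cooperating with any longer cycle through $v_1$; once these are in place, the remainder is a clean specialization of the cycle-cover expansion of the characteristic polynomial familiar from spectral graph theory.
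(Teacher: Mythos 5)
Your proof is correct, and it takes a genuinely different route from the paper's. The paper proves the statement by induction on $k$ using Leverrier's trace identity $c_k=-\tfrac{1}{k}\operatorname{trace}(A^k+c_1A^{k-1}+\dotsb+c_{k-1}A)$, which forces a somewhat delicate bookkeeping of closed walks of length $k$ sorted by how often and where they meet $v_1$ (the categories (a), (b), (c) in the paper's argument), with the inductive hypothesis used to show that the lower-order trace terms cancel exactly the walks that revisit $v_1$. You instead expand $c_i$ as $(-1)^i$ times the sum of $i\times i$ principal minors and invoke the cycle-cover (linear-subgraph) interpretation of each $\det(A_S)$, which is the standard ``coefficient theorem'' from spectral graph theory. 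The decisive structural observation in your version --- that tree-edges strictly increase the vertex index under the length-lexicographic enumeration while all loop-back edges terminate at $v_1$, so \emph{every} directed cycle contains $v_1$ and hence no two vertex-disjoint cycles coexist --- collapses all cycle covers to single $i$-cycles through $v_1$, and the bijection of those (with edge multiplicity) with depth-$i$ terminal nodes finishes the count with sign $(-1)^{i-1}$, giving $c_i=-t_i$. What your approach buys is a non-inductive, essentially one-shot argument that isolates the graph-theoretic reason the theorem is true (acyclicity away from $v_1$) and handles the multigraph multiplicities transparently through the Leibniz product; what the paper's approach buys is that it stays entirely at the level of traces of powers of $A$, i.e.\ closed-walk counts, without invoking the principal-minors/Sachs-type expansion. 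Both are valid; yours is arguably the cleaner specialization.
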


\begin{proof}
We proceed by induction on $k\leq n$.  Let $G$ be the directed graph derived from $T$ as in the proof of Theorem \ref{thm-treegraph}(i). For $k=1$, it is a standard fact in linear algebra that for the characteristic polynomial associated with a matrix $A$, we have $c_1 = -\mathrm{trace}(A)$.  Note that the trace of $A$ counts the number of self-loops in $G$.  By the construction of $G$ from $T$, the only vertex of $G$ that can have a self-loop is the vertex $v_1$, where each such self-loop in $G$ corresponds to a terminal node in $T$ of length $1$.  Thus, $-c_1$ is the number of terminal nodes of $T$ of depth 1.

For a fixed $k$ with $2\leq k\leq n$, assume that we have established the result for all $j<k$.  We will use the Leverrier's algorithm for calculating the coefficients of the characteristic polynomial of an $n\times n$ matrix $A$ (see, e.g., \cite{Gan59}), according to which for $2\leq k<n$, 
\[
c_k = -\frac{1}{k}\mathrm{trace}(A^k+c_1A^{k-1}+\dotsc+c_{k-1}A).
\]
We begin this case by noting that the number of terminal nodes of $T$ of depth $k$ is equal to the number of $k$-cycles in $G$ (that is, cycles of length $k$) that begin at $v_1$ and only pass through $v_1$ once.  We further observe that the entries along the diagonal of $A^k$, each of which contributes to the trace of $A^k$, count the total number of $k$-cycles in $G$, where this number is the sum of
\begin{itemize}
\item[(a)] the number of $k$-cycles in $G$ that begin at $v_1$ and only pass through $v_1$ once,
\item[(b)] the number of $k$-cycles in $G$ that begin at some $v\neq v_1$ and only pass through $v_1$ once, and
\item[(c)] the number of $k$-cycles in $G$ that pass through $v_1$ more than once (regardless of where they start).
\end{itemize}
As the number of cycles counted in (a) and (b) above is equal to $k$ times the number of terminal nodes of $T$ of depth $k$, each cycle counted in (a) is counted $k-1$ additional times in (b) (corresponding to the $k-1$ different vertices other than $v_1$ at which the each such cycle can start).

Next, for any $j$ with $1\leq j\leq k-1$, by our inductive hypothesis, we have

\begin{multline*}
-\mathrm{trace}(c_jA^{k-j})=\\
(\text{$\#$ of terminal nodes in  $T$ of depth $j$})\\
 \times (\text{$\#$ of $(k-j)$-cycles in $G$})\\
=-(\text{$\#$ of $j$-cycles in $G$ satisfying (a)})\\
\times (\text{$\#$ of $(k-j)$-cycles in $G$}).\hspace{.55in}
\end{multline*}
This latter expression counts all $k$-cycles in $G$ that have one of the following two forms:  
\begin{itemize}
\item[(i)] if the cycle begins at $v_1$, the cycle consists of a $j$-cycle that passes through $v_1$ once followed by a $(k-j)$-cycle (with no constraints on the number of times it passes through $v_1$), or
\item[(ii)] if the cycle does not begin at $v_1$, it begins with a $(k-j)$-cycle until it reaches $v_1$, completes a $j$-cycle that begins and ends at $v_1$ (without passing through $v_1$ any other time), and then completes the rest of the original $(k-j)$-cycle.
\end{itemize}
If we take the sum of all such terms $-\mathrm{trace}(c_jA^{j-k})$ for $1\leq j\leq k-1$, this will count all $k$-cycles that fall under the condition (c) given above.

It thus follows that the term 
\begin{multline}\label{eq-trace}
\mathrm{trace}(A^k+c_1A^{k-1}+\dotsc+c_{k-1}A)=\\
\mathrm{trace}(A^k)-(-\mathrm{trace}(c_1A^{k-1})+\dotsc\\
-(-\mathrm{trace}(c_{k-1}A))
\end{multline}
counts the number of $k$-cycles that fall under conditions (a) and (b) above, which as noted above, equals $k$ times the number of terminal nodes of $T$ of depth $k$.  Lastly, dividing the expression given above by (\ref{eq-trace}) by $k$ and using Leverrier's Method, we conclude that $-c_k$  is equal to the number of terminal nodes of $T$ of depth $k$.  This establishes the inductive step, and so the result holds.

\end{proof}

\begin{thm}\label{thm-Perron-cc}
Let $T$ be a finite tree that induces a length function $\ell_T$, and let $\rho$ be the Perron eigenvalue of the adjacency matrix of the directed graph associated to $T$.  Then $\log(\rho)$ is the channel capacity of $\ell_T$.

\end{thm}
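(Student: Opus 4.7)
The plan is to combine Theorem \ref{thm-spectral} with the characteristic polynomial identity $p_A(\rho) = 0$ to directly recover the defining equation of the channel capacity.

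First, by Theorem \ref{thm-spectral}, the characteristic polynomial of the adjacency matrix $A$ is
\[
p_A(z) = z^n + c_1 z^{n-1} + \dotsc + c_{n-1} z + c_n,
\]
where, for each $1 \le i \le n$, the coefficient $-c_i$ is precisely the number of terminal nodes of $T$ of depth $i$. Since $\rho$ is an eigenvalue of $A$, we have $p_A(\rho) = 0$, which I would rewrite as
\[
\rho^n = \sum_{i=1}^n (-c_i)\,\rho^{n-i},
\]
and then divide both sides by $\rho^n$ (note $\rho > 0$ by the Perron--Frobenius theorem, since $G$ is irreducible) to obtain
\[
1 = \sum_{i=1}^n (-c_i)\,\rho^{-i}.
\]

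Next, I would regroup the right-hand sum by terminal node rather than by depth. If $\tau_0, \dotsc, \tau_{k-1}$ are the terminal nodes of $T$, then for each depth $i$ the coefficient $-c_i$ counts exactly the terminal nodes $\tau_j$ with $|\tau_j| = i$. Summing over terminal nodes is therefore the same as summing over depths weighted by these counts, giving
\[
\sum_{j=0}^{k-1} \rho^{-|\tau_j|} = \sum_{i=1}^n (-c_i)\,\rho^{-i} = 1.
\]
Since $\ell_T(j) = |\tau_j|$ by definition of the induced length function, this reads
\[
\sum_{j=0}^{k-1} \rho^{-\ell_T(j)} = 1.
\]

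Finally, setting $r = \rho^{-1} = 2^{-\log(\rho)}$ and comparing with Equation (\ref{eq1}), we see that $\log(\rho)$ satisfies the defining equation of the channel capacity $\alpha$ of $\ell_T$. Since uniqueness of the solution was already established in Section \ref{subsec-ucc}, we conclude that $\alpha = \log(\rho)$.

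I do not anticipate a serious obstacle: Theorem \ref{thm-spectral} does all the combinatorial heavy lifting, and the argument reduces to a one-line algebraic manipulation of $p_A(\rho)=0$ together with the uniqueness of the channel capacity. The only minor care needed is the justification that $\rho > 0$, which is immediate from Perron--Frobenius applied to the irreducible nonnegative matrix $A$ (with irreducibility coming from Theorem \ref{thm-treegraph}(i)).
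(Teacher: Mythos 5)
Your proposal is correct and follows essentially the same route as the paper's own proof: divide $p_A(\rho)=0$ by $\rho^n$, use Theorem \ref{thm-spectral} to reinterpret the coefficients $-c_i$ as counts of terminal nodes by depth, and invoke the uniqueness of the solution to Equation (\ref{eq1}). The only addition is your explicit remark that $\rho>0$ via Perron--Frobenius, which the paper leaves implicit.
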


\begin{proof}
Let $A$ be the adjacency matrix of the directed graph associated to $T$, and let $p_A(z)=z^n+c_1z^{n-1}+\dotsc+c_{n-1}z+c_n$ be the characteristic polynomial of $A$.  If $\rho\geq 1$ is the Perron eigenvalue of $p_A$, we have
\[
\rho^n+c_1\rho^{n-1}+\dotsc+c_{n-1}\rho+c_n=0.
\]
Dividing through by $\rho^n$ and rearranging yields
\[
-c_1\left(\frac{1}{\rho}\right)-\dotsc-c_{n-1}\left(\frac{1}{\rho}\right)^{n-1}-c_n\left(\frac{1}{\rho}\right)^n=1.
\]
Setting $\alpha=\log(\rho)$, so that $2^{-\alpha}=\frac{1}{\rho}$, by substitution we have
\begin{equation}\label{eq-alpha}
-c_12^{-\alpha}-\dotsc-c_{n-1}2^{-\alpha(n-1)}-c_n2^{-\alpha n}=1.
\end{equation}
By Theorem \ref{thm-spectral},
for $i=1,\dotsc,n$, $-c_i$ is equal to the number of terminal nodes in $T$ of depth $i$, so we can rewrite Equation (\ref{eq-alpha}) as
\begin{equation}\label{eq-alpha2}
\sum_{\tau\in S}2^{-\alpha|\tau|}=1,
\end{equation}
where $S$ is the set of terminal nodes of $T$.  As the unique positive solution of Equation (\ref{eq-alpha2}) is the channel capacity of $\ell$, the conclusion follows.

\end{proof}

\subsection{Restricting the Parry measure on a sofic shift}

In their work on path sets, Abram and Lagarias showed that path sets are a generalization of sofic shifts.  A \emph{sofic shift} is subset of $\Sigma_n^\infty$ for some $n\in\N$  that consists of all sequences of symbols obtained from an one-sided infinite walks though a finite directed graph with edge labels, where, unlike the case with path sets, the walks can start from any vertex of the graph.  Abram and Lagarias further proved the equality of several notions of topological entropy relating path sets and sofic shifts.  We review the definitions.  First, for a path set $P$, let $N_n(P)$ denote the number of distinct blocks of length $n$ occurring in any member of $P$.  Similarly, $N_n^I(P)$ denotes the number of distinct initial blocks of length $n$ from symbols in $P$.  Then we have the following definitions.

\begin{dfn}{\ }
\begin{enumerate}
\item The \emph{path topological entropy} of a path set $P$ is defined to be 
\[
H_p(P) = \limsup_{n\rightarrow\infty} \frac{1}{n}\log N_n(P).
\]
\item The \emph{topological entropy} of a path set $P$ is
\[
H_{\mathit{top}}(P) = \limsup_{n\rightarrow\infty} \frac{1}{n}\log N_n^I (P).
\]
\end{enumerate}
\end{dfn}

Finally, given a path set $P\subseteq \Sigma_n^\infty$, the \emph{one-sided shift closure} $\overline P$ of $P$ is $\bigcup_{i\in\N}S^i(P)$, where $S$ is the shift operator on $\Sigma_n^\infty$.  Note that for a path set $P$, its shift closure $\overline{P}$ is a sofic shift, for which the notion of topological entropy $H_{\mathit{top}}(\overline{P})$ is well-defined.  Then we have:

\begin{thm}[Abram, Lagarias \cite{AbrLag14}]
Let $P\subseteq\Sigma_n^\infty$ be a path shift.  Then $H_p(P)=H_{\mathit{top}}(P)=H_{\mathit{top}}(\overline{P})$.
\end{thm}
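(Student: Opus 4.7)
The plan is to establish the chain $H_{\mathit{top}}(P) \leq H_p(P) = H_{\mathit{top}}(\overline{P})$ using a combination of elementary inclusions and a substantive combinatorial argument that exploits the right-resolving presentation whose existence is guaranteed by Abrams and Lagarias. The easy half of the chain is $H_{\mathit{top}}(P) \leq H_p(P)$: since every initial block of length $n$ is in particular a block of length $n$, $N_n^I(P) \leq N_n(P)$, and the inequality follows by taking $\limsup \frac{1}{n}\log$. The identity $H_p(P) = H_{\mathit{top}}(\overline{P})$ is then nearly definitional: a word $w$ of length $n$ occurs in some element of $\overline{P} = \bigcup_{i\in\N} S^i(P)$ iff $w$ occurs as a length-$n$ block somewhere in some $x\in P$, so the set of length-$n$ words occurring in $\overline{P}$ coincides with the set counted by $N_n(P)$, and the two $\limsup$'s match.

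The substantive step is $H_p(P) \leq H_{\mathit{top}}(P)$. I would fix a right-resolving presentation $(\mathcal{G}, v)$ of $P$ with $k$ vertices (and WLOG prune vertices that do not lie on any infinite walk). By right-resolving, distinct walks starting at a common vertex $u$ carry distinct label sequences, so the number of distinct length-$n$ label sequences arising at $u$ equals $W_n(u)$, the number of length-$n$ walks from $u$. Now every length-$n$ block of some element of $P$ is the label sequence of a length-$n$ walk beginning at some vertex $u$ reachable from $v$. Since $G$ has only $k$ vertices, each such reachable $u$ is accessible from $v$ by a simple walk of length $d_u \leq k-1$; prepending this walk gives an injection from length-$n$ walks at $u$ into length-$(n+d_u)$ walks at $v$, hence $W_n(u) \leq N_{n+d_u}^I(P)$. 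Summing over reachable vertices yields
\[
N_n(P) \;\leq\; k \cdot \max_{0 \leq d \leq k-1} N_{n+d}^I(P).
\]
Taking $\log$, dividing by $n$, and passing to $\limsup$ (using $(n+d)/n \to 1$ for each fixed $d\leq k-1$, together with the definition of $H_{\mathit{top}}(P)$ via $\limsup$) gives $H_p(P) \leq H_{\mathit{top}}(P)$.

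The main obstacle is this last inequality: bounding arbitrary occurrences of length-$n$ blocks by initial ones at bounded extra cost. The right-resolving hypothesis is crucial because it converts the combinatorial walk count $W_n(u)$ into an exact count of distinct labelled blocks, and the finite-vertex bound $d_u \leq k-1$ absorbs the cost of prepending a path from $v$ to $u$ into a uniformly small additive shift in length. Beyond these two ingredients, the rest is a routine $\limsup$ estimate.
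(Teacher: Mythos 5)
This theorem is imported from Abram and Lagarias \cite{AbrLag14}; the paper states it without proof, so there is no in-paper argument to compare against. Your proposal is a correct self-contained proof. The two easy links are fine: $N_n^I(P)\leq N_n(P)$ gives $H_{\mathit{top}}(P)\leq H_p(P)$, and since the blocks occurring in elements of $\overline{P}=\bigcup_i S^i(P)$ at initial position are exactly the blocks occurring anywhere in elements of $P$, one gets $N_n^I(\overline{P})=N_n(P)$ and hence $H_p(P)=H_{\mathit{top}}(\overline{P})$. For the substantive inequality $H_p(P)\leq H_{\mathit{top}}(P)$, your two supporting observations are exactly what is needed and you flag them correctly: the pruning of vertices admitting no infinite outgoing walk is essential so that every length-$(n+d_u)$ walk from $v$ actually extends to an element of $P$ and its label is therefore counted by $N_{n+d_u}^I(P)$ (and pruning is harmless, since any vertex on a path from $v$ to a surviving vertex also survives); and the bound $d_u\leq k-1$ makes the length shift uniform so the $\limsup$ over the finite set of shifts collapses to $H_{\mathit{top}}(P)$. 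One small remark: right-resolving is not strictly necessary for this direction. You use it to equate the count of distinct labels at $u$ with the walk count $W_n(u)$, but the inequality you need can be obtained by injecting label sequences directly --- prepending the fixed label word of the chosen path from $v$ to $u$ sends distinct length-$n$ labels realized at $u$ to distinct initial blocks of length $n+d_u$ --- so the argument works for any presentation. Using the right-resolving presentation does no harm and is the hypothesis Abram--Lagarias make available, so this is a stylistic point rather than a gap.
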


We apply this result in our own context as follows.  Given a finite tree $T$ and its associated pointed directed graph $\G=(G,v_1)$, we can assume by Theorem \ref{thm-treegraph}(i) that $G$ is right-solving and irreducible.  As noted above, the one-sided shift closure of  the set of sequences obtained from one-sided infinite walks through $G$ is a sofic shift, which we will write as $X_G$.  In this context, $G$, equipped with its edge labels, is referred to as a \emph{presentation} of $X_G$.  Moreover, since the cycles through $G$ that begin and end at the distinguished vertex $v_1\in G$ correspond to the terminal nodes of $T$, it is not difficult to establish that $G$ is \emph{minimal}, in the sense that there is no presentation $G'$ of $X_G$ that has strictly fewer vertices than $G$.  Finally, as is known in the symbolic dynamics literature, a sofic shift given by an irreducible, right-resolving, minimal presentation has a unique measure of maximal entropy, which is known as the \emph{Parry measure}. 

We should note that the entropy of a measure on a graph is slightly more general than the entropy of a measure on a sequence space such as $\Sigma_n^\infty$.  For a measure $\mu$ on a graph, we first specify initial probabilities for the initial vertex of each edge, written $i(e)$ for an edge $e$, and then we specific conditional probabilities $\mu(e\mid i(e))$ for each edge of the graph.  Thus, the entropy of $\mu$ is
\[
h(\mu)=-\sum_{e\in E(G)}\mu(i(e))\log(\mu(e\mid i(e))\mu(e\mid i(e)).
\]
Finally, the measure $\mu$ of maximal entropy on a sofic shift $X$ is one that attains the value $h(\mu)=H_{\mathit{top}}(X)$.

For the case of $X_G$ defined above, let us look more closely at the derivation of the Parry measure on $X_G$, following the presentation found in see \cite[Section 13.3]{LinMar21}.   Assuming that $T$ contains exactly $n$ non-terminal nodes, so that $G$ consists of vertices $v_1,\dotsc,v_n$, let $A$ be the $n\times n$ adjacency matrix of $G$ as in Section \ref{subsec-spectral} above.  Let $\rho$ be the Perron eigenvalue of $A$, and let $\vec{r}=\langle r_1,\dotsc, r_n\rangle$ be the right eigenvector corresponding to $\rho$.   Then for each directed edge $e$ from vertex $v_i$ to vertex $v_j$, in a walk through $G$, given that we are at the vertex $v_i$, the probability of transitioning along $e$ is equal to $\frac{r_j}{r_i\rho}$.  

From this we can derive the following (which can be derived from results in \cite[Section 13.3]{LinMar21}):

\bigskip

\noindent\emph{Fact:} The probability of traversing an $n$-cycle in $G$ given that it starts at $v_1$ is equal to $\rho^{-n}$.

\bigskip

Indeed, given an $n$-cycle that begins at $v_1$, which passes through the vertices $v_1, v_{i_1}, v_{i_2},\dotsc, v_{i_{n-1}}, v_1$, the probability of transitioning along this $n$-cycle is
\[
\left(\frac{r_{i_1}}{r_1\rho}\right)\left(\frac{r_{i_2}}{r_{i_1}\rho}\right)\cdots\left(\frac{r_0}{r_{i_{n-1}}\rho}\right)=\frac{1}{\rho^n}.
\]

Using this, we can establish the following, which relates the material from the present section to the material from Section \ref{sec-coding}:

\begin{thm}
Let $T$ be a finite tree with associated pointed directed graph $\mathcal{G}=(G, v_1)$ and length function $\ell$.  Then the restriction of the Parry measure on the sofic shift $X_G$ with presentation $G$ to the path set determined by $\mathcal{G}$ is equal to the pushforward of $\mu_\ell$ under the function $\Psi_T:\Sigma_k^\infty\rightarrow F_T$, which maps coding sequences in $\Sigma_k^\infty$ to elements in $F_T$.
\end{thm}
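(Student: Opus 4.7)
The plan is to verify that the two measures agree on the $\pi$-system of cylinders in $F_T$ generated by initial segments of the form $\sigma = \tau_{i_1} \fr \tau_{i_2} \fr \cdots \fr \tau_{i_j}$ (concatenations of $j$ terminal nodes of $T$), and then to extend to the full Borel $\sigma$-algebra by the Dynkin $\pi$-$\lambda$ theorem.

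First, I would compute both measures on such a cylinder. The preimage of $\llbracket \sigma \rrbracket \cap F_T$ under $\Psi_T$ is the cylinder $\llbracket i_1 \cdots i_j \rrbracket \subseteq \Sigma_k^\infty$. Since $\mu_\ell$ is Bernoulli with $\mu_\ell(i) = r^{\ell(i)}$ and $r = 2^{-\alpha} = 1/\rho$ by Theorem \ref{thm-Perron-cc}, this cylinder has measure $\prod_{e=1}^{j} \rho^{-|\tau_{i_e}|}$. On the Parry side, the walk in $G$ corresponding to any element of $F_T$ whose initial segment is $\sigma$ traces out, starting at $v_1$, the concatenation of $j$ cycles $C_1, \ldots, C_j$, where each $C_e$ begins and ends at $v_1$, visits $v_1$ only at its endpoints, and has length $|\tau_{i_e}|$. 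Applying iteratively the preceding fact that any $n$-cycle at $v_1$ has Parry probability $\rho^{-n}$, together with the Markov property of the Parry measure, yields the same product $\prod_{e=1}^{j} \rho^{-|\tau_{i_e}|}$.

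Next I would extend this agreement to arbitrary cylinders in $F_T$. Any cylinder $\llbracket \sigma' \rrbracket \cap F_T$ with $\sigma'$ an initial segment of some path in $F_T$ can be written as a disjoint union of cylinders of the kind handled above, indexed by the terminal nodes of $T$ that extend the partial suffix of $\sigma'$; countable additivity then transfers the agreement to $\llbracket \sigma' \rrbracket \cap F_T$. A standard application of the $\pi$-$\lambda$ theorem then yields equality of the two probability measures on all Borel subsets of $F_T$.

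The main obstacle I anticipate is pinning down precisely what \emph{restriction} of the Parry measure to the path set means, since the Parry measure naturally lives on the full sofic shift $X_G$ with initial distribution proportional to the left Perron eigenvector of $A$, while $F_T$ consists of walks starting at the single distinguished vertex $v_1$. Restricting amounts to conditioning on the event of starting at $v_1$, and the key observation that makes the restricted measure clean is the telescoping cancellation of the factors $r_j/r_i$ in the transition probabilities $r_j/(r_i\rho)$ around any cycle through $v_1$; this cancellation is exactly what guarantees probability $\rho^{-n}$ for an $n$-cycle at $v_1$, and in turn what allows the direct cylinder computation to match $\mu_\ell$.
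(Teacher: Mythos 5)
Your proposal is correct and follows essentially the same route as the paper: both rest on the telescoping of the Parry transition probabilities $r_j/(r_i\rho)$ around cycles through $v_1$, giving probability $\rho^{-n}$ for an $n$-cycle, which matches $\mu_\ell(i)=\rho^{-\ell(i)}$ under the identification of cycles at $v_1$ with terminal nodes of $T$. Your treatment is in fact slightly more careful than the paper's, which only checks agreement on cylinders given by single terminal nodes and asserts that this determines the measure, whereas you verify agreement on the full $\pi$-system of concatenations and invoke the $\pi$-$\lambda$ theorem explicitly.
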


\begin{proof}
Let $\rho$ be the Perron eigenvalue of the adjacency matrix given by $G$.  By the above fact, for the shift closure of the set of one-way infinite walks through $G$, the value that the Parry measure assigns to each cylinder set given by an $n$-cycle in $G$ that begins at $v_1$, namely the value $\rho^{-n}$.   Similarly, if we restrict the Parry measure to the path set given by $\mathcal{G}$, which by Theorem \ref{thm-treegraph} is precisely the fractal $F_T$, we get a measure $\xi$ on $F_T$ that assigns to each cylinder set defined by a $n$-cycle in $G$ that begins at $v_1$ the value $\rho^{-n}$.  Moreover, as we saw in the proof of Theorem \ref{thm-treegraph}, each $n$-cycle in $G$ that begins at $v_1$ has the same edge labels as some terminal node in $T$ of length $n$.  Thus, $\xi$ assigns to each cylinder set defined by a terminal node of $T$ the value $\rho^{-n}$.

Next, by Theorem \ref{thm-Perron-cc},  $\log(\rho)$ is the channel capacity of $\ell$. As shown at the end of Subsection \ref{subsec-ucc}, $\mu_\ell(i)=r^{\ell(i)}$, where $r=1/\rho$.  Then the pushforward of $\mu_\ell$ under the function $\Psi_T$ is a measure $\nu$ on $F_T$ that is completely determined by its behavior on cylinders given by the terminal nodes $\tau_0,\dotsc, \tau_{k-1}$ in $T$, where $\nu(\tau_i)=\mu_\ell(i)$, so that $\nu(\tau_i)=\rho^{-\ell(i)}$.  In particular, for a terminal $\tau_i$ node of length $n$, $\ell(i)=|\tau_i|=n$, so that $\nu(\tau_i)=\rho^{-n}$.  By what we derived in the previous paragraph, $\nu$ and $\xi$ assign the same values to the same cylinders, thereby establishing the theorem.
\end{proof}

%
%
%
%
%
\section{Acknowledgements} I am thankful for the feedback and suggestions made by two anonymous referees that have improved the presentation of this article.

\bibliographystyle{alpha}
\bibliography{ucc_fractal}

\end{document}